\newtheorem{theorem}{Theorem}[section]
\newtheorem{proposition}[theorem]{Proposition}
\newtheorem{lemma}[theorem]{Lemma}
\theoremstyle{definition}
\theoremstyle{remark}
\numberwithin{equation}{section}
\newcommand{\Z}{\mathbb{Z}}
\newcommand{\N}{\mathbb{N}}
\newcommand{\Q}{\mathbb{Q}}
\newcommand{\fa}{{\mathfrak a}}
\newcommand{\fb}{{\mathfrak b}}
\newcommand{\fd}{{\mathfrak d}}
\newcommand{\fe}{{\mathfrak e}}
\newcommand{\fp}{{\mathfrak p}}
\newcommand{\fq}{{\mathfrak q}}
\newcommand{\cO}{{\mathcal O}}
\newcommand{\cS}{{\mathcal S}}
\newcommand{\tQ}{{\widetilde{Q}}}
\newcommand{\lra}{\longrightarrow}
\newcommand{\too}{\longmapsto}
\newcommand{\Cl}{\operatorname{Cl}}
\newcommand{\disc}{\operatorname{disc}}
\newcommand{\smatr}[4]{(\begin{smallmatrix} 
             #1 & #2 \\ #3 & #4 \end{smallmatrix})}
\begin{document}



\title[Pell surfaces]{Arithmetic of Pell surfaces}

\author{S. Hambleton}
\author{F. Lemmermeyer}

\address{Department of Mathematics, The University of Queensland\\
St. Lucia, Brisbane, Queensland, Australia 4072}
\email{sah@maths.uq.edu.au}

\address{M\"orikeweg 1, 73489 Jagstzell, Germany}
\email{hb3@ix.urz.uni-heidelberg.de}

\date{June 29, 2009}

\begin{abstract}
We define a group stucture on the primitive integer points $(A,B,C)$ of 
the algebraic variety $Q_0(B,C)=A^n$, where $Q_0$ is the principal 
binary quadratic form of fundamental discriminant $\Delta$ and 
$n \geq 2$ is fixed. A surjective homomorphism is given from this
group to the $n$-torsion subgroup of the narrow ideal class group of 
the quadratic number field $\Q(\sqrt{\Delta})$.

\end{abstract}

\subjclass[2000]{Primary 14J25, 11G99; Secondary 11R11, 11R29}

\keywords{surface arithmetic, Pell conics, class group, quadratic number field, norm form}

\maketitle

\section{Introduction}
A classical technique for constructing quadratic number fields
with class number divisible by $n$ is studying integral solutions
of the equation
\begin{equation}\label{Yamamoto} 
    X^2-\Delta Y^2=4Z^n, \quad \text{gcd}(X,Z)=1, \quad \Delta \text{ a fundamental discriminant.} 
\end{equation} 
For each integral point $(X,Y,Z)$ we can form the ideal 
$\fa = \bigl( \frac{X+Y\sqrt{\Delta}}2, Z \bigr)$ in the ring of integers
of $\Q(\sqrt{\Delta}\,)$; the ideal $\fa$ has norm $|Z|$ and 
satisfies $\fa^n = \bigl( \frac{X+Y\sqrt{\Delta}}2 \bigr)$, hence generates 
an ideal class of order dividing $n$.

It seems that P. Joubert \cite{Joub} was the first to observe that
a class of prime order $n$ in the group of binary quadratic forms 
with negative discriminant $\Delta$ implies the solvability of the 
equation (\ref{Yamamoto}); Joubert used techniques from the theory
of complex multiplication for exploiting this observation. 
Nagell \cite{Nagell} later used (\ref{Yamamoto}) for proving the 
existence of infinitely many complex quadratic number fields with 
class number divisible by $n$. By extending Nagell's approach, 
Yamamoto \cite{Yama} was able to prove the existence of infinitely 
many {\em real} quadratic number fields with class number divisible by $n$.

The same approach was further extended by various authors; we
mention in particular Craig \cite{Cr73}.

In this article, we interpret (\ref{Yamamoto}) as an affine surface
and show that a certain subset $\cS_n(\Z)$ of the integral points 
on (\ref{Yamamoto}) can be given a group structure in such a way that 
\begin{enumerate}
\item[a)] the integral points on the hyperplane $Z=1$, which lie
          on the Pell conic $X^2 - \Delta Y^2 = 4$, form a subgroup
          with respect to the classical group structure on Pell 
          conics (see \cite{Lem00,Lem03,LemDes3});
\item[b)] there is a surjective group homomorphism 
          $\cS_n(\Z) \lra \Cl^+(K)[n]$ to the $n$-torsion 
          of the narrow class group of $K = \Q(\sqrt{\Delta}\,)$.
\end{enumerate}
These results explain the success of Yamamoto's approach, and at
the same time raise a few new problems that we do not yet fully 
understand. The rational points on the surface lying on the 
hyperplane $Y = 1$ form (the affine part of) a hyperelliptic 
curve $E: X^2 = 4Z^n + \Delta$; in the case $n = 3$, this is 
an elliptic curve. Although the integral points on $E$ do
not form a group in general, it was observed by Buell 
\cite{Bu76,Bu77} and Soleng \cite{Soleng} that the integral 
points on $E$ (and, more generally, certain rational points 
satisfying some technical conditions) give ideal classes of 
order dividing $3$ in such a way that the map from $E$ to the 
$3$-class group respects the group law on the elliptic curve, 
i.e., that collinear points get mapped to classes whose product 
is trivial. B\"olling \cite{Boel} has extended these results
to the hyperelliptic curves lying on the surface (\ref{Yamamoto}).

\begin{figure}[h]
\begin{center}$
\begin{array}{cc}
\includegraphics[width=5.0cm]{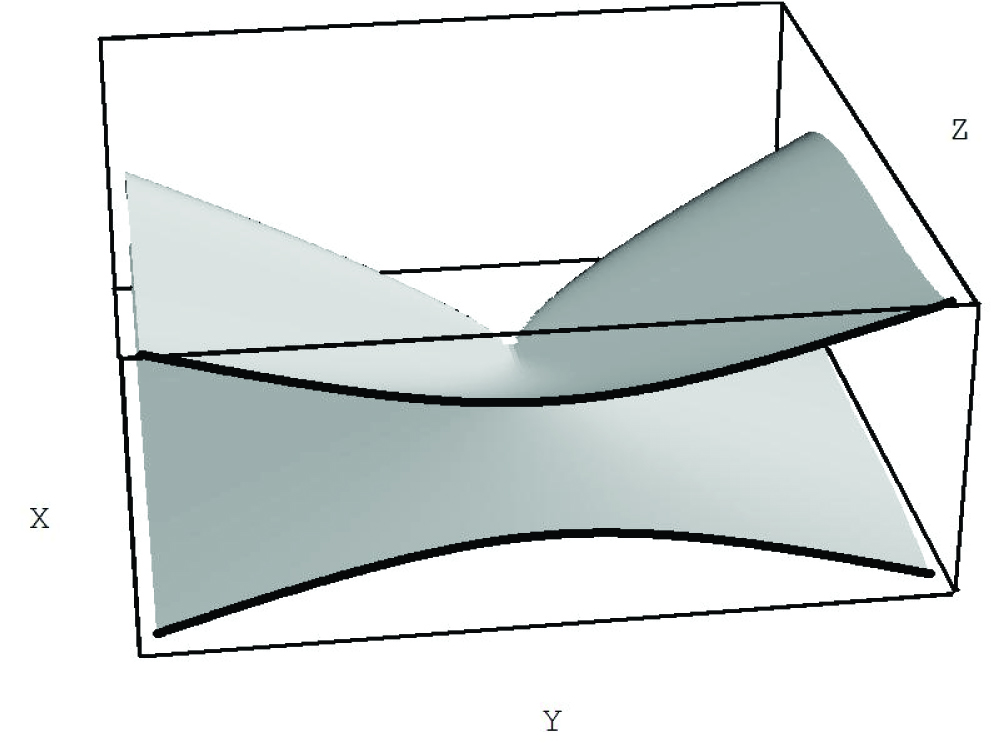}&
\includegraphics[width=5.0cm]{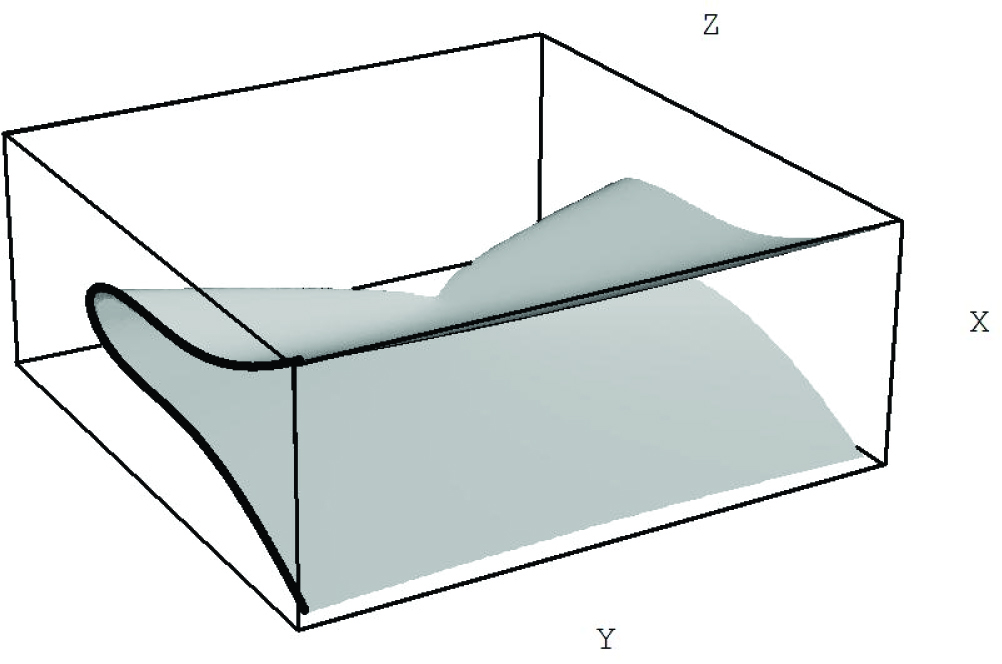}
\end{array}$
\end{center}
\caption{$\cS_3$ for $\Delta >0$ with  cross sections $Z=1$ on the left, a Pell conic, and $Y=1$ on the right, an elliptic curve.}
\label{firstfig}
\end{figure}

\begin{figure}[h]
\begin{center}$
\begin{array}{cc}
\includegraphics[width=5.0cm]{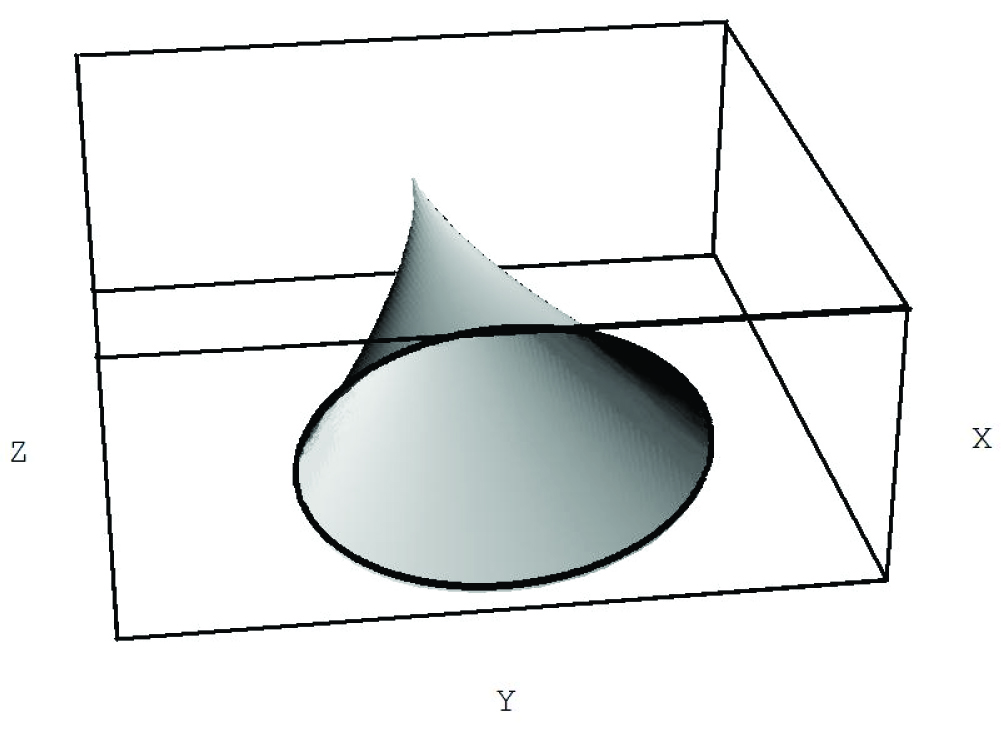}&
\includegraphics[width=5.0cm]{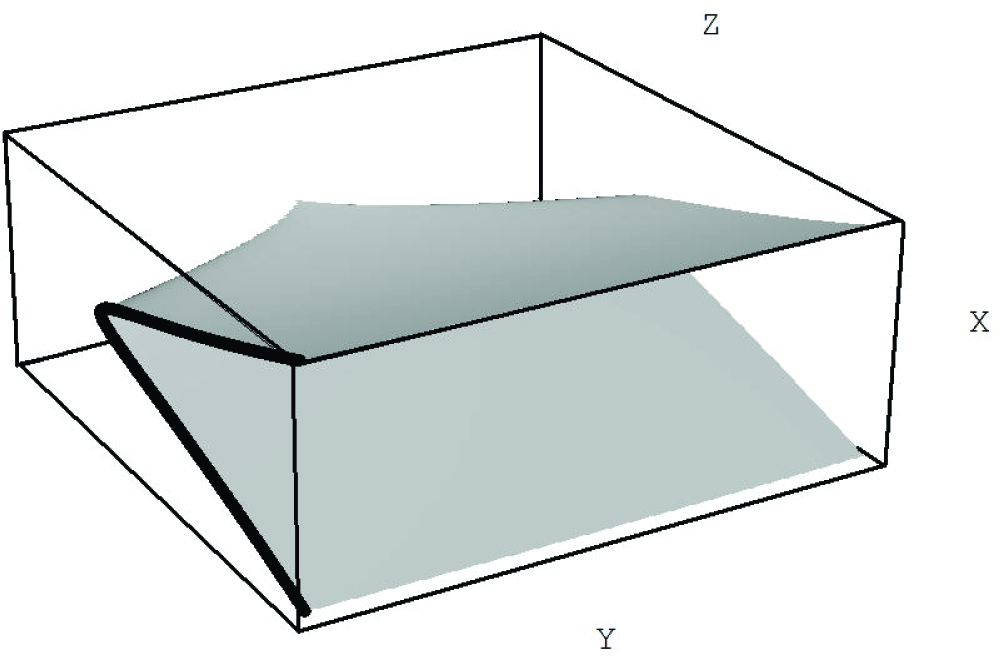}
\end{array}$
\end{center}
\caption{$\cS_3$ for $\Delta <0$ with cross sections $Z=1$ on the left, a Pell conic, and $Y=1$ on the right, an elliptic curve.}
\label{secondfig}
\end{figure}

Although we will see below that the group law\footnote{The group
law on Pell surfaces was discovered by the first author, as was
the homomorphism to the class group.} is best understood by using 
ideals in quadratic number fields, the explicit addition formulas
are tied closely to the composition of binary quadratic forms. 
For this reason, we replace the equation (\ref{Yamamoto}) of the 
surface by $Q_0(X,Y) = Z^n$, where $Q_0$ is the principal form 
with discriminant $\Delta$ defined below. For a brief introduction 
to the composition of binary quadratic forms via Bhargava's cubes 
see Lemmermeyer \cite{LemPep}; more details along more classical 
lines can be found in Flath \cite{Flath}.

\section{Primitive Points on Pell Surfaces}

Let $\Delta$ be a fundamental discriminant (the discriminant of a 
quadratic number field). The principal form with discriminant $\Delta$
is defined by 
$$ Q_0(x,y) = \begin{cases}
         x^2 - my^2       & \text{ if } \Delta = 4m, \\
         x^2 + xy - my^2  & \text{ if } \Delta = 4m+1. \end{cases} $$
By $Q = (a,b,c)$ we denote the binary quadratic form $ax^2 + bxy + cy^2$. 
Such a form $Q$ represents an integer $d$ if $Q(x,y) = d$
for some integers $x, y$; it is said to represent $d$ primitively
if, in addition, $\gcd(x,y) = 1$.

An integral point $(A,B,C)$ on the Pell surface 
\begin{equation}\label{ESn}
   \cS_n: Q_0(B,C) = A^n 
\end{equation} 
is called primitive if $\gcd(B,C) = 1$. The set of primitive 
points on $\cS_n$ will be denoted by $\cS_n(\Z)$.

Now consider Eqn. (\ref{Yamamoto}) and map a point $(A,B,C)$ on the 
Pell surface (\ref{ESn}) to a point $(X,Y,Z)$ on (\ref{Yamamoto}) by 
setting 
$$ (X,Y,Z) = \begin{cases}
              (2B, C, A) & \text{ if } \Delta = 4m, \\
              (2B+C,C,A) & \text{ if } \Delta = 4m+1. 
             \end{cases} $$
This clearly gives a bijection between the integral points on these
surfaces. In addition, Yamamoto's condition $\gcd(X,Z) = 1$ is
easily seen to be equivalent to the primitivity of $(A,B,C)$, that
is, to $\gcd(B,C) = 1$.

\section{The Group Law}

Let $\cO$ denote the ring of integers of the quadratic number field $\Q(\sqrt{\Delta })$. There is a natural map
$\pi_0: \cS_n(\Z) \lra \cO$ defined by 
$\pi_0(A,B,C) = B+C\omega$, where
$$ \omega = \frac{\sigma + \sqrt{\Delta}}2, $$
and $\sigma \in \{0, 1\}$ is defined by $\Delta = 4m+\sigma$.
The elements in the image of $\pi_0$ have the property that 
their norms are $n$-th powers: $N(\pi_0(A,B,C)) = Q_0(B,C) = A^n$.

Consider the set $\cO^*$ of nonzero elements in $\cO$ and
its subset $\N$ of nonzero natural numbers. The set $\cO^*/\N^n$, 
using $\N^n$ to refer to positive integers which are $n$-th powers, 
is a group with respect to multiplication: the neutral element 
is $1\N^n$, the inverse of $\alpha \N^n$ is
$\frac1{\alpha} |N(\alpha)|^n \N^n$ (the element $|N\alpha|/\alpha$
is, up to sign, simply the conjugate $\alpha'$ of $\alpha$, and so
belongs to $\cO^*$). The norm map induces a group homomorphism 
$N: \cO^*/\N^n \lra \Z^*/\Z^{*\,n}$, where $\Z^* = \Z \setminus \{0\}$ 
and $\N$ denote the monoids of nonzero and of positive integers,
repectively.

Observe that if $\alpha, \beta \in \cO^*$ are primitive (this
means that $p \nmid \alpha$ for all primes $p \in \N$) and
$\alpha \N^n \cdot \beta \N^n = \gamma \N^n$, then in general
$\gamma$ cannot be chosen to be primitive. An example is provided
by $\alpha = 3 + \sqrt{3}$ and $\beta = \sqrt{3}$, where
$\gamma = 3 + 3 \sqrt{3}$; here $\gamma \N^n$ is not generated
by a primitive element for any $n \ge 2$.
On the other hand we shall prove below

\begin{proposition}\label{P1}
The cosets of primitive elements in the kernel of the norm map 
$N: \cO^*/\N^n \lra \Z^*/\Z^{*\,n}$ form a subgroup $\Pi_n$ of 
$\cO^*/\N^n$.
\end{proposition}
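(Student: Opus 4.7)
The plan is to verify the three subgroup axioms for $\Pi_n$ directly. The identity $1\N^n$ lies in $\Pi_n$ trivially. For inverses, write $|N\alpha| = m^n$; then $\alpha\alpha' = N\alpha = \pm m^n$ gives $\alpha^{-1}\N^n = \pm\alpha'\N^n$, and $\pm\alpha'$ is primitive iff $\alpha$ is (rational primes being fixed under conjugation) with the same absolute norm, so $\alpha^{-1}\N^n \in \Pi_n$.

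The substantive task is closure under multiplication. The plan is to translate primitivity together with the kernel condition into a statement about the ideal $(\alpha)$. Using that a rational prime $p$ divides $\alpha$ in $\cO$ iff $(p) \mid (\alpha)$, a case analysis on the splitting behaviour of each rational prime $p$ shows: for primitive $\alpha$ with $|N\alpha| \in \N^n$, one has $v_\fp(\alpha) = 0$ at every inert or ramified $\fp$ (the ramified case requiring $n \geq 2$), while at each split $p = \fp\fp'$ exactly one of $v_\fp(\alpha), v_{\fp'}(\alpha)$ vanishes and the other lies in $n\Z$. Equivalently, $(\alpha) = \fa^n$ for an integral ideal $\fa$ using at most one prime from each split pair.

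Given two such $\alpha, \beta \in \Pi_n$ with $(\alpha) = \fa^n$ and $(\beta) = \fb^n$, I would compute the content $d = \gcd(\alpha\beta)$, i.e.\ the largest positive integer dividing $\alpha\beta$ in $\cO$, via $v_p(d) = \min_{\fp\mid p} \lfloor v_\fp(\alpha\beta)/e_\fp\rfloor$. A prime-by-prime check yields $v_p(d) \in n\Z$ throughout: inert and ramified primes contribute $0$, and a split $p$ contributes either $0$ (when $\fa$ and $\fb$ are supported on the same prime above $p$) or $n\min(a,b)$ (when they are supported on opposite primes, with multiplicities $na$ and $nb$). Hence $d = t^n$, and $\delta := \alpha\beta/t^n \in \cO$ is a primitive representative of $\alpha\beta\N^n$ with $|N\delta| = |N\alpha|\,|N\beta|/t^{2n} \in \N^n$, so $\alpha\beta\N^n \in \Pi_n$. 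I expect the main obstacle to be the split-prime case analysis, where one must combine primitivity (which forces $\alpha$ to ``pick a side'' of each split pair) with the kernel condition (which then forces the chosen valuation to lie in $n\Z$); once this characterization of $(\alpha)$ is in hand, the closure claim reduces to routine valuation bookkeeping.
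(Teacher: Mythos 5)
Your proposal is correct and follows essentially the same route as the paper: both arguments rest on the observation that a primitive element of the kernel satisfies $(\alpha)=\fa^n$ with support avoiding inert and ramified primes and at most one prime from each split pair, and both conclude that the content of $\alpha\beta$ is an $n$-th power $t^n$ so that $\alpha\beta/t^n$ is a primitive representative. The only difference is cosmetic: you carry out the closure step by direct valuation bookkeeping, whereas the paper writes $\alpha\beta=\gamma a^n$ with $a$ maximal and derives $p\nmid\gamma$ by contradiction.
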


This fact allows us to prove that there is a bijective map
$\pi: \cS_n(\Z) \lra \Pi_n$ given by $\pi (A,B,C) = (B+C\omega )\N^n$; using this bijection we can 
make $\cS_n(\Z)$ into an abelian group.
The situation is summed up by the following diagram:
$$ \begin{CD}
     \cS_n(\Z) @>{\pi}>{\simeq}> \Pi_n @. @. \\
        @.  @VVV    @.  @. \\ 
    1 @>>> \ker N @>>> \cO^*/\N^n @>{N}>>  \Z^*/\Z^{*\,n}. 
    \end{CD} $$

\begin{theorem}\label{T1}
The map $\pi: \cS_n(\Z) \lra \Pi_n$ is bijective; thus $\cS_n(\Z)$ 
becomes an abelian group by transport of structure.
\end{theorem}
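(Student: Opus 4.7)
The plan is to verify that $\pi$ is well-defined, injective, and surjective; once bijectivity is established, no further argument is needed, since the group structure on $\Pi_n$ supplied by Proposition~\ref{P1} transports to $\cS_n(\Z)$ via $P_1 + P_2 := \pi^{-1}(\pi(P_1)\pi(P_2))$. For well-definedness I would take a primitive $(A,B,C) \in \cS_n(\Z)$ and note that $\alpha = B + C\omega$ is a primitive element of $\cO^*$, since any rational prime dividing $\alpha$ must divide both $B$ and $C$, contradicting $\gcd(B,C)=1$; the identity $N(\alpha) = Q_0(B,C) = A^n \in \Z^{*n}$ then places $\alpha\N^n$ in $\ker N$, and hence in $\Pi_n$.

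For injectivity I would exploit that $\{1,\omega\}$ is a $\Z$-basis of $\cO$. If $\pi(A_1,B_1,C_1) = \pi(A_2,B_2,C_2)$, then there exist positive integers $r,s$ with $s^n(B_1 + C_1\omega) = r^n(B_2 + C_2\omega)$; comparing coefficients with respect to $\{1,\omega\}$ yields $s^n B_1 = r^n B_2$ and $s^n C_1 = r^n C_2$. Combined with $\gcd(B_i,C_i)=1$ for $i=1,2$, these equations force $r^n = s^n$, hence $r = s$, so $(B_1,C_1) = (B_2,C_2)$ and then $A_1^n = A_2^n$ pins down $A$ (uniquely up to the sign convention one adopts when $n$ is even). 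For surjectivity I would start from a primitive $\alpha \in \cO^*$ with $N(\alpha) \in \Z^{*n}$, write $\alpha = B + C\omega$ with $B,C \in \Z$, and observe that the primitivity of $\alpha$ is equivalent to $\gcd(B,C) = 1$; choosing $A$ with $A^n = Q_0(B,C) = N(\alpha)$ then produces a primitive point $(A,B,C) \in \cS_n(\Z)$ mapping to $\alpha\N^n$.

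The step I expect to be most delicate is the interaction between $\N^n$-scaling and primitivity: one must be sure that replacing a primitive $\alpha$ by $\alpha k^n$ for some $k\in\N$ destroys primitivity unless $k=1$, so that each class in $\Pi_n$ has an essentially unique primitive representative in $\cO^*$---this is exactly what lets the coordinates $(B,C)$ be read off unambiguously. A secondary bookkeeping matter is the sign of $A$ when $n$ is even, where $\Z^{*n}\subset\N$ forces $Q_0(B,C)>0$ and $A$ must be chosen with a fixed sign; this is a matter of convention rather than substance. Beyond these points the argument is formal, and the transport-of-structure prescription above makes $\cS_n(\Z)$ into an abelian group.
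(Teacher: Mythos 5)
Your argument is correct and follows essentially the same route as the paper: injectivity comes from comparing coefficients in $\{1,\omega\}$ and using the primitivity of $B+C\omega$ to force the $n$-th-power scaling factors to equal $1$, and surjectivity comes from reading the coordinates $(A,B,C)$ off a primitive representative $\alpha = B+C\omega$ with $N\alpha = A^n$. You are in fact slightly more careful than the paper in flagging the sign ambiguity of $A$ when $n$ is even (both $(A,B,C)$ and $(-A,B,C)$ lie on $\cS_n$ and have the same image), a point the paper's proof passes over silently.
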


\begin{proof}
Injectivity: assume that there are elements 
$(A,B,C), (A',B',C') \in \cS_n(\Z)$ with $\pi(A,B,C) = \pi(A',B',C')$. 
Then there exist $a, b \in \N$ with 
$(B + C\omega)a^n = (B'+C'\omega)b^n$, and the primitivity of 
$B+C\omega$ and $B'+C'\omega$ implies that $a^n$ and $b^n$ must
be units. Since $a, b \in \N$, this implies $a^n = b^n = 1$.

Surjectivity: assume that $\alpha = B + C\omega$ is primitive with 
$\alpha \N^n \in \Pi_n$. Then $N\alpha = A^n$ for some number 
$A \in \Z^*$ implies $Q_0(B,C) = A^n$, hence 
$(A,B,C) \in \cS_n(\Z)$ with $\pi(A,B,C) = \alpha$.
\end{proof}

Observe that the neutral element of $\cS_n(\Z)$ is the point
$(A,B,C) = (1,1,0)$, and that the inverse of $(A,B,C)$ is given by
$$ -(A,B,C) = \begin{cases}
                 (A,B+\sigma C,-C) & \text{ if } A > 0, \\
                 (A,-B-\sigma C,C) & \text{ if } A < 0. \end{cases} $$
Observe also that the integral points on the Pell conic
$Q_0(T,U) = 1$, which correspond to the points $(1,T,U)$ on the 
Pell surface, inherit their classical group structure since
$(T_1+U_1\omega)(T_2+U_2\omega) = T_3+U_3 \omega$, where
$(T_3,U_3) = (T_1T_2 + mU_1U_2,T_1U_2+T_2U_1)$ if $\Delta = 4m$ and 
$(T_3,U_3) = (T_1T_2 + mU_1U_2,T_1U_2+T_2U_1 + U_1U_2)$ if $\Delta = 4m+1$.
In fact, since the elements $\alpha_j = T_j+U_j\omega$ have norm $1$, 
the element  $\alpha_3 = \alpha_1\alpha_2$ is always primitive.

For proving Prop. \ref{P1} we use the following characterization
of primitive elements:

\begin{lemma}\label{Lprim}
Let $\alpha \in \cO^*$ be a nonzero element of the order $\cO$.
\begin{enumerate}
\item[a)] $\alpha$ is primitive if and only if $(\alpha) + (\alpha') = \fd$ 
      for some ideal $\fd$ dividing the product of all ramified primes. 
\item[b)] If $N \alpha = a^n$ for some $n \ge 2$, then $\alpha$ is primitive 
      if and only if $(\alpha) + (\alpha') = (1)$.
\end{enumerate}
\end{lemma}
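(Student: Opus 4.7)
The plan is to prove (a) by analyzing the ambiguous ideal $(\alpha)+(\alpha')$, and then to deduce (b) by a short valuation calculation at ramified primes.

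For (a), I would first note that $(\alpha)+(\alpha')$ is invariant under the nontrivial Galois automorphism of $\Q(\sqrt{\Delta})/\Q$, hence is an \emph{ambiguous} integral ideal of $\cO$. By the standard structure theorem every such ideal factors uniquely as $(m)\cdot\fe$ with $m\in\N$ and $\fe$ a squarefree product of distinct ramified primes, since any square $\fp^2$ of a ramified prime equals $(p)$ and is absorbed into the rational factor. The central step is to verify the purely ideal-theoretic equivalence
$$(p)\mid (m)\fe \iff p\mid m$$
for each rational prime $p$, which falls out by inspecting the three cases $p$ inert, split, or ramified. Combined with the chain
$$p\mid\alpha \iff (\alpha)\subseteq(p) \iff (\alpha)+(\alpha')\subseteq(p),$$
whose last equivalence uses that $(p)$ is Galois-invariant, this gives: $\alpha$ primitive iff $m=1$ iff $(\alpha)+(\alpha')=\fe$ divides the product of the ramified primes, which is (a).

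For (b), the implication $(\alpha)+(\alpha')=(1)\Rightarrow\alpha$ primitive is immediate from (a), since $(1)$ vacuously divides the ramified product. For the converse, assume $\alpha$ is primitive with $N\alpha=a^n$, $n\ge 2$, and write $(\alpha)+(\alpha')=\fd=\fp_1\cdots\fp_k$ as provided by (a), with each $\fp_i$ ramified; the goal is to rule out $k\ge 1$. Suppose $\fp\mid\fd$ is ramified over a rational prime $p$, so that $\fp^2=(p)$, $N\fp=p$, and $\fp$ is the unique prime of $\cO$ above $p$. Primitivity gives $p\nmid\alpha$, so $v_\fp(\alpha)\le 1$; but $\fp\mid\fd\mid(\alpha)$ forces $v_\fp(\alpha)\ge 1$, hence $v_\fp(\alpha)=1$. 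The norm formula then yields $v_p(N\alpha)=v_\fp(\alpha)=1$, whereas $v_p(N\alpha)=v_p(a^n)=nv_p(a)\in\{0,n,2n,\ldots\}$ cannot equal $1$ when $n\ge 2$. This contradiction forces $\fd=(1)$.

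The main obstacle is marshalling the structure of ambiguous ideals of $\cO$ correctly in part (a); after that is in place, (b) reduces to a one-line local calculation at a ramified prime.
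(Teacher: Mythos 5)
Your proof is correct and follows essentially the same route as the paper: part (a) rests on the same inert/split/ramified case analysis (merely packaged via the standard $(m)\fe$ decomposition of the ambiguous ideal $(\alpha)+(\alpha')$ and the equivalence $p\mid\alpha \iff (\alpha)+(\alpha')\subseteq (p)$, where the paper argues the two directions separately), and part (b) is exactly the paper's argument that $\fp\,\|\,(\alpha)$ at a ramified prime would force $v_p(N\alpha)=1$, impossible for an $n$-th power with $n\ge 2$. No gaps.
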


\begin{proof}
Assume first that $\alpha$ is primitive, let $\fp$ be an unramified
prime ideal, and set $(\alpha) + (\alpha') = \fd$. If we had 
$\fp \mid \fd$, then $\fp \mid (\alpha)$ and $\fp' \mid (\alpha)$.
Since $\fp$ is unramified, the prime $p$ below $\fp$ either splits (and
then $(p) = \fp\fp'$), or $\fp = (p)$ is inert. In both cases we
deduce that $p \mid (\alpha)$, which contradicts our assumption that 
$\alpha$ be primitive.

Conversely, assume that $\fd$ divides the product of all ramified
primes. If $p \mid \alpha$ for some prime $p \in \N$, then 
$p \mid \alpha'$, hence $p \mid \fd$. This shows that 
$(\alpha) + (\alpha')$ is divisible either by an unramified prime 
ideal or by the square of a ramified prime ideal. This proves the
first statement.

For proving b), assume first that $(\alpha) + (\alpha') = (1)$; 
then $(\alpha)$ is primitive by what we have already proved. 

Finally, if $N\alpha = a^n$ and $\alpha$ is primitive, then 
$\fd$ is a product of ramified prime ideals. But if $\fp || \alpha$ 
for some ramified prime ideal $\fp$ above $p$, then 
$p || \alpha \alpha' = a^n$, and this is impossible for $n \ge 2$.
\end{proof}

\begin{lemma}
Let $\alpha$ be a primitive element. If $\alpha \N^n \in \ker N$, 
then $(\alpha) = \fa^n$ is an $n$-th ideal power. The converse 
holds if $\alpha$ is totally positive.
\end{lemma}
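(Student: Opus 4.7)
The plan is to treat the two implications separately: the forward direction rests on the coprimality $(\alpha) + (\alpha') = (1)$ furnished by Lemma~\ref{Lprim}(b), while the converse is a direct norm computation using total positivity.

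For the forward direction, assume $\alpha$ is primitive with $N\alpha = c^n$ for some $c \in \Z^*$. Lemma~\ref{Lprim}(b) gives that the ideals $(\alpha)$ and $(\alpha')$ are coprime in $\cO$. I would start from the identity
$$ (\alpha)(\alpha') \;=\; (N\alpha) \;=\; (c)^n $$
and examine, for each rational prime $p \mid c$, the contribution of the primes of $\cO$ above $p$, using that $(\alpha')$ is the Galois conjugate of $(\alpha)$ so that $v_\fp((\alpha)) = v_{\fp'}((\alpha'))$ for every prime $\fp$. In the inert and ramified cases the prime $\fp$ above $p$ is self-conjugate, so this conjugation symmetry forces $v_\fp((\alpha)) = v_\fp((\alpha'))$; combined with coprimality, this gives $v_\fp((\alpha)) = 0$. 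In the split case $(p) = \fp\fp'$, coprimality forces at most one of $\fp, \fp'$ to appear in $(\alpha)$, and whichever does must carry the entire $v_\fp$-contribution of $(c)^n$, namely $n \cdot v_p(c)$, which is a multiple of $n$. Collecting one factor from each split prime with its exponent divided by $n$ produces an ideal $\fa$ with $(\alpha) = \fa^n$.

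For the converse, assume $(\alpha) = \fa^n$ and $\alpha$ is totally positive. Taking ideal norms yields $|N\alpha| = N(\fa)^n$, so $N\alpha = \pm N(\fa)^n$. Total positivity forces $N\alpha > 0$ (vacuously when $\Delta < 0$, and when $\Delta > 0$ because $N\alpha = \alpha\alpha'$ is a product of two positive reals), hence $N\alpha = N(\fa)^n \in \N^n$ and $\alpha\N^n \in \ker N$.

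The main obstacle is the bookkeeping in the forward direction: one has to combine the conjugation symmetry of ideal valuations with the coprimality of $(\alpha)$ and $(\alpha')$ to exclude the inert and ramified contributions, and only then read off $n$-divisibility of the remaining split exponents from $(c)^n$. Nothing beyond unique factorization in the Dedekind domain $\cO$ is required once this is set up, and the sign subtlety at the end of the converse is the only place where total positivity enters.
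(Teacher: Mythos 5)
Your proof is correct and takes essentially the same route as the paper: the forward direction invokes Lemma~\ref{Lprim}(b) to get $(\alpha) + (\alpha') = (1)$ and then extracts $(\alpha) = \fa^n$ from $(\alpha)(\alpha') = (c)^n$ (you merely spell out the prime-by-prime valuation bookkeeping that the paper leaves implicit), and the converse is the same sign argument via total positivity.
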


\begin{proof}
The claim is trivial for $n = 1$; assume therefore that $n \ge 2$.

If $\alpha$ is primitive and $N\alpha = a^n$, Lemma \ref{Lprim}
implies that $\alpha$ and $\alpha'$ are coprime. Now 
$(\alpha)(\alpha') = a^n$ implies that $(\alpha) = \fa^n$ is an 
$n$-th ideal power.

Now assume that $(\alpha) = \fa^n$. Then $N\alpha = \pm A^n$ for
some positive integer $A$, and since $\alpha$ is totally positive,
we have $N \alpha > 0$.
\end{proof}

\begin{proof}[Proof of Prop. \ref{P1}]
Assume that $\alpha$ and $\beta$ are primitive elements representing
cosets in the kernel of the norm map. Write $\alpha \beta = \gamma a^n$
with $\gamma \in \cO^*$ and with $a \ge 1$ maximal. We have to show
that $\gamma$ is primitive. 

Assume not; then $p \mid \gamma$ for some rational prime $p$. 
Since $p \nmid \alpha$ and $p \nmid \beta$ (by the primitivity
of these elements), the prime $p$ cannot be inert, and there 
is a prime ideal $\fp$ above $p$ with $\fp \mid (\alpha)$ and 
$\fp' \mid (\beta)$. Since $\alpha$ and $\beta$ are $n$-th ideal
powers, we must have $\fp^{kn} || (\alpha)$ and ${\fp'}^{kn} || (\beta)$,
and this implies $p^{kn} = (\fp\fp')^{kn} || \gamma a^n$. By
the maximality of $a$ we must have $p^{kn} || a^n$, and this 
implies $p \nmid \gamma$.

Thus $\Pi_n$ is closed under multiplication; since the inverse of 
$\alpha \N^n$ is $\pm \alpha' \N^n$ (with the sign chosen in such a way
that $\alpha \cdot (\pm \alpha') > 0$), the set $\Pi_n$ forms a 
subgroup of $\ker N$.
\end{proof}

\medskip\noindent{\bf Remark.} The points with $A=1$ on 
$\cS_n$ form a subgroup of $\cS_n(\Z)$; such points $(1,B,C)$ 
correspond to units $B+C\omega \in \cO$, and the 
group law is induced by the usual multiplication of units.
This shows that the group law on the Pell conic $Q_0(B,C) = 1$
coincides with the standard group law on these curves.

\section{The homomorphism $\cS_n(\Z) \lra \Cl^+(K)[n]$}

We have already remarked that the set $\cS_n(\Z)$ 
was used to extract information on the $n$-torsion of the class 
group $\Cl(K)$ of the quadratic number field $K = \Q(\sqrt{\Delta}\,)$. 
Given a point $(A,B,C) \in \cS_n(\Z)$, we know that $\alpha = B + C\omega$ 
is an $n$-th ideal power: $(\alpha) = \fa^n$. Sending $\alpha$ to the 
narrow ideal class of $\fa$ we get a map $c: \cS_n(\Z) \lra \Cl^+(K)[n]$ from 
$\cS_n(\Z)$ to the group of ideal classes (in the strict sense) in 
$K$ whose order divides $n$:

\begin{proposition}
The map 
$$ c: \cS_n(\Z) \lra \Cl^+(K)[n] $$ 
is a surjective group homomorphism.
\end{proposition}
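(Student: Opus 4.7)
The plan is to check in turn that $c$ is well defined (landing in $\Cl^+(K)[n]$), that it is a group homomorphism, and that it is surjective.

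For well-definedness, Lemma~\ref{Lprim}(b) gives $(\alpha)+(\alpha')=(1)$ for $\alpha=B+C\omega$ when $(A,B,C)\in\cS_n(\Z)$; combined with $(\alpha)(\alpha')=(A)^n$, unique factorization in the Dedekind ring $\cO$ forces $(\alpha)=\fa^n$ for a unique integral ideal $\fa$, so $c(A,B,C):=[\fa]$ is a well-defined narrow class. To see that $[\fa]$ has order dividing $n$ in $\Cl^+(K)$, I would observe that $\fa^n=(\alpha)$ is principal and that, since $N\alpha=A^n$, a suitable sign of $\alpha$ provides a totally positive generator.

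For the homomorphism property, recall that the group law on $\cS_n(\Z)$ is transported via $\pi$ from multiplication in $\Pi_n\subset\cO^*/\N^n$. Thus if $(A_3,B_3,C_3)=(A_1,B_1,C_1)+(A_2,B_2,C_2)$ in $\cS_n(\Z)$, then $\alpha_1\alpha_2=\alpha_3\,t^n$ for some $t\in\N$. Passing to principal ideals yields $\fa_1^n\fa_2^n=\fa_3^n(t)^n$, and the uniqueness of $n$-th roots of ideals (from unique prime factorization) forces $\fa_1\fa_2=\fa_3(t)$. Since $t>0$, $(t)$ is trivial in $\Cl^+(K)$, and therefore $c(A_1,B_1,C_1)\,c(A_2,B_2,C_2)=c(A_3,B_3,C_3)$.

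For surjectivity, let $[\fa]\in\Cl^+(K)[n]$. I would first replace $\fa$ by a primitive integral representative of its narrow class that is moreover coprime to every ramified prime ideal (achievable by scaling by a totally positive element and dividing out any common rational integer factor). Since $[\fa]^n$ is trivial in $\Cl^+(K)$, $\fa^n=(\alpha)$ for some totally positive $\alpha\in\cO$; primitivity of $\fa$ together with its coprimality to the ramified primes forces $\alpha$ to be primitive (a rational prime $p\mid\alpha$ would imply that every prime ideal above $p$ divides $\fa$, contradicting the choice). Writing $\alpha=B+C\omega$ and $A=N(\fa)\in\N$ then gives $Q_0(B,C)=A^n$, so $(A,B,C)\in\cS_n(\Z)$ and $c(A,B,C)=[\fa]$.

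The main obstacle I anticipate is confirming well-definedness into $\Cl^+(K)[n]$ rather than merely $\Cl(K)[n]$: this requires producing a totally positive generator of $\fa^n=(\alpha)$, which is automatic when $N\alpha>0$ but needs care in the real-quadratic case when $N\alpha<0$ and the fundamental unit has norm $+1$.
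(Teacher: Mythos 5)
Your proof is correct and follows essentially the same route as the paper: the homomorphism property comes from $\alpha_1\alpha_2=\alpha_3 t^n$ together with unique factorization of ideals, and surjectivity from choosing a primitive ideal representative coprime to the ramified primes so that the totally positive generator $\alpha$ of $\fa^n$ is itself primitive. The well-definedness subtlety you flag --- that a point with $A<0$, $n$ odd and $\Delta>0$ gives $N\alpha=A^n<0$, so $\fa^n$ need not have a totally positive generator when the fundamental unit has norm $+1$ --- is genuine, but the paper does not resolve it either; it is quietly sidestepped by the remark following the proposition, which restricts attention to the subgroup $\cS_n^+(\Z)$ of points with $A>0$, exactly the case where your ``suitable sign of $\alpha$'' argument works.
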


\begin{proof}
Proving that $c$ is a group homomorphism is easy: let 
$P_j = (A_j,B_j,C_j)$ $\in \cS_n(\Z)$ with $P_1 \oplus P_2 = P_3$,
and put $\alpha_j = B_j + C_j \omega$. Then $(\alpha_j) = \fa_j^n$,
and $\alpha_1 \N^n \cdot \alpha_2 \N^n = \alpha_3 \N^n$
for some $\alpha_3$ that differs from $\alpha_1\alpha_2$
by the $n$-th power of some positive integer $a$. This implies that
$\fa_1^n \fa_2^n = \fa_3^n \cdot a^n$, hence 
$c(P_1)c(P_2) = c(P_1 \oplus P_2)$ as claimed.

For proving that $c$ is onto, consider the narrow ideal class $[\fa] \in \Cl^+(K)[n]$ 
for some ideal $\fa$ coprime to the discriminant. Then 
$\fa^n = (\alpha)$ for some $\alpha = B + C\omega$.
We claim that we can choose $\fa$ in such a way that $\alpha$
is primitive: in fact, let $p$ be a prime dividing $B$ and $C$.
If $p$ is inert, then $\fa = p \fb$, and replacing $\fa$ by $\fb$
does not change the ideal class. If $(p) = \fp\fp'$ is split, then 
we must have $\fp \mid \fa$ and $\fp' \mid \fa$, so again $\fa = p \fb$.
Since $\fa$ is coprime to the discriminant, ramified prime
ideals do not divide $\fa$. Since $(\alpha)$ is principal in the
strict sense, we have $A^n = N\alpha > 0$; writing $\alpha = B + C\omega$
we find $(A,B,C) \in \cS_n(\Z)$ as claimed. 
\end{proof}

Observe that $\cS_n^+(\Z)$, the subset of all $(A,B,C) \in \cS_n(\Z)$
with $A > 0$, forms a subgroup of $\cS_n(\Z)$, and that the proof 
above shows that the natural map $\cS_n^+(\Z) \lra \Cl^+(K)[n]$ is 
surjective.

It is in general difficult to tell whether a point $(A,B,C) \in \cS_n(\Z)$
gives rise to an element of exact order $n$ or not, or more generally,
whether two points generate independent elements. In the following, we
shall briefly recall the criterion used by Yamamoto.

To this end, we introduce a natural homomorphisms between the 
groups $\cS_n(\Z)$:

\begin{proposition}
Assume that $m \mid n$; then there is a group homomorphism
$$ \iota_{m \to n}: \cS_m(\Z) \lra \cS_n(\Z). $$
\end{proposition}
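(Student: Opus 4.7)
The plan is to define $\iota_{m \to n}$ by
\[
 \iota_{m \to n}(A,B,C) = (A,\,B',\,C'), \qquad \text{where } B' + C'\omega = (B + C\omega)^k,\ \ k = n/m.
\]
This is the transport through the bijection $\pi$ of the natural assignment $\alpha \N^m \mapsto \alpha^k \N^n$ on primitive cosets. The proposition then reduces to two checks: $(A,B',C') \in \cS_n(\Z)$, and $\iota_{m \to n}$ respects the group laws.

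For the first, set $\alpha = B + C\omega$. Multiplicativity of the norm gives $Q_0(B',C') = N(\alpha^k) = N(\alpha)^k = (A^m)^k = A^n$, so the choice $A' = A$ works. The primitivity condition $\gcd(B',C') = 1$ is where Lemma \ref{Lprim}b) does the real work: because $\alpha$ is primitive with $N\alpha = A^m$ and $m \ge 2$, that lemma gives $(\alpha) + (\alpha') = (1)$. Powers of coprime ideals are coprime, so
\[
 (\alpha^k) + ((\alpha^k)') \;=\; (\alpha)^k + (\alpha')^k \;=\; (1);
\]
reading Lemma \ref{Lprim}b) in the opposite direction (now for $\alpha^k$, whose norm $A^n$ is an $n$-th power with $n \ge 2$) concludes that $\alpha^k$ is primitive.

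For the second, suppose $P_1 \oplus P_2 = P_3$ in $\cS_m(\Z)$, and write $\alpha_j = B_j + C_j \omega$. The group law on $\cS_m(\Z)$ is transported from $\Pi_m$, so by the argument of Proposition \ref{P1} there exists $a \in \N$ with $\alpha_1 \alpha_2 = \alpha_3 a^m$. Raising to the $k$-th power yields
\[
 \alpha_1^k \alpha_2^k \;=\; \alpha_3^k a^{mk} \;=\; \alpha_3^k a^n,
\]
so $\alpha_1^k \N^n \cdot \alpha_2^k \N^n = \alpha_3^k \N^n$ in $\Pi_n$. Pulling this back through $\pi$ reads $\iota_{m \to n}(P_1) \oplus \iota_{m \to n}(P_2) = \iota_{m \to n}(P_3)$; and $\iota_{m \to n}(1,1,0) = (1,1,0)$ since $1^k = 1$.

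The only genuinely non-routine ingredient is the primitivity of $\alpha^k$. The ideal-theoretic characterization of Lemma \ref{Lprim}b) is exactly the lever that makes this painless: it converts ``$\alpha$ divisible by no rational prime'' into ``$(\alpha)$ and $(\alpha')$ coprime,'' a condition manifestly preserved under taking $k$-th powers. A direct prime-by-prime argument is possible but substantially messier, especially in controlling ramified primes.
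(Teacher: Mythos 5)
Your construction coincides with the paper's: send $(A,B,C)$ to $(A,B',C')$ with $B'+C'\omega=(B+C\omega)^k$, $k=n/m$, compute the norm, check primitivity of the $k$-th power, and raise the relation $\alpha_1\alpha_2=\alpha_3a^m$ to the $k$-th power to get compatibility with the group laws. You actually justify the primitivity of $\alpha^k$ more explicitly than the paper does: the paper simply asserts that $\alpha^k$ is primitive when $\alpha$ is, ``except possibly when $m=1$ and $\alpha$ is divisible by a ramified prime,'' whereas your route through Lemma \ref{Lprim}b) and the coprimality of powers of coprime ideals is the correct way to substantiate that assertion.

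The one point you pass over is the case $m=1$, which the hypothesis $m\mid n$ allows and which is exactly the ``problematic case'' the paper singles out. Your appeal to Lemma \ref{Lprim}b) needs $m\ge 2$; for $m=1$ a primitive $\alpha$ need not satisfy $(\alpha)+(\alpha')=(1)$, and the map can genuinely fail: for $\Delta=12$ the point $(-3,0,1)$ is a primitive point of (naive) $\cS_1(\Z)$ with $\alpha=\sqrt{3}$, yet $\alpha^2=3$ is not primitive, so $\iota_{1\to 2}$ is undefined there. This is why the paper redefines $\cS_1(\Z)$ as the set of primitive points with $\gcd(A,\Delta)=1$, equivalently with $\alpha$ not divisible by any ramified prime ideal; under that convention Lemma \ref{Lprim}a) yields $(\alpha)+(\alpha')=(1)$ and your argument then goes through verbatim for all $m$. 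Either add that sentence or state explicitly that you are treating $m\ge 2$ and deferring $m=1$ to the modified definition.
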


In order to avoid a problematic case, we let $\cS_1(\Z)$ denote
the set of all primitive points $(A,B,C)$ such that $\gcd(A,\Delta) = 1$;
equivalently, $B+C\omega$ is primitive and not divisible by any 
ramified prime ideal.

\begin{proof}
Assume that $(A,B,C) \in \cS_m(\Z)$. With $\alpha = B + C\omega$
we have $(\alpha) = \fa^m$; setting $n = km$, we find 
$(\alpha^k) = \fa^n$, hence $N(\alpha^k) = (A^m)^k = A^n$. 
Observe that $\alpha^k$ is primitive if $\alpha$ is, except
possibly when $m = 1$ and $\alpha$ is divisible by a ramified
prime.

Setting $\alpha^k = B' + C'\omega$, we have $(A,B',C') \in \cS_n(\Z)$.
Since the map $\iota_{m \to n}$ sending $(A,B,C)$ to $(A,B',C')$ is 
compatible with the group structure (in fact: if 
$(B_1 + C_1 \omega) a_1^m \cdot (B_2 + C_2 \omega) a_2^m
  = (B_3 + C_3 \omega) a_3^m$, then raising this equation to the
$k$-th power shows that 
$(B_1' + C_1' \omega) a_1^n \cdot (B_2' + C_2' \omega) a_2^n
  = (B_3' + C_3' \omega) a_3^n$), the claim follows.
\end{proof}

As an example, consider the surface $B^2 + BC + 6C^2 = A^3$;
using the point $(6,1,-1)$ on $\cS_1(\Z)$ we find
$(1-\omega)^3 = -11 + 5\omega$, which gives us the point
$(6,-11,5) \in \cS_3(\Z)$.

It is desirable to have criteria for deciding whether a point
$P \in \cS_n(\Z)$ is actually a newpoint, i.e., does not
come from $\cS_m(\Z)$ for some proper divisor $m$ of $n$.

\begin{proposition}
Assume that $\Delta < -4$. If $P = (A,B,C) \in \cS_n(\Z)$ and 
$n = mp$ for some odd prime $p$, then $P = \iota_{m \to n}(Q)$ 
for some $Q \in \cS_m(\Z)$ implies that $2B + \sigma C$ is a 
$p$-th power modulo $q$ for every prime $q \mid A$.
\end{proposition}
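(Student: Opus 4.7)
The plan is to translate $P = \iota_{m \to n}(Q)$ into the elementwise identity $\alpha_0^p = \alpha$ in $\cO$, where $\alpha_0 = B_0 + C_0\omega$ for $Q = (A,B_0,C_0)$ and $\alpha = B + C\omega$. Since the trace of $B' + C'\omega$ equals $2B' + \sigma C'$, it suffices to prove the Frobenius-like congruence $\alpha_0^p + \overline{\alpha_0^p} \equiv (\alpha_0 + \overline{\alpha_0})^p \pmod q$ for each prime $q \mid A$, as this exhibits $2B + \sigma C$ as the $p$-th power of $2B_0 + \sigma C_0$ modulo $q$.

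Fix $q \mid A$. From $\alpha_0 \overline{\alpha_0} = \pm A^m$, some prime $\fq \subset \cO$ above $q$ divides $\alpha_0$. Lemma \ref{Lprim}(b) (or, when $m=1$, the convention $\gcd(A,\Delta) = 1$ built into $\cS_1(\Z)$) yields $(\alpha_0) + (\overline{\alpha_0}) = (1)$. This rules out $q$ being inert (else $q = \fq \mid \alpha_0$, contradicting primitivity) and being ramified (else $\fq = \overline{\fq}$ would divide both $\alpha_0$ and $\overline{\alpha_0}$), so $q$ splits as $\fq\overline{\fq}$ with $\fq \mid \alpha_0$ and $\overline{\fq} \nmid \alpha_0$. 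Via CRT, identify $\cO/q\cO \cong \mathbb{F}_q \times \mathbb{F}_q$; then $\alpha_0$ reduces to $(0,v)$ for some $v \in \mathbb{F}_q^*$, nonzero lest $q \mid \alpha_0$.

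Because $\alpha_0 \overline{\alpha_0} \equiv 0 \pmod q$, the reduction of $\overline{\alpha_0}$ must have the form $(a,0)$; and since $\alpha_0 + \overline{\alpha_0} \in \Z$ embeds diagonally in $\mathbb{F}_q \times \mathbb{F}_q$, matching components forces $a = v$. Raising componentwise to the $p$-th power yields $(0,v^p)$ and $(v^p,0)$ as the reductions of $\alpha_0^p$ and $\overline{\alpha_0^p}$, so $\alpha_0^p + \overline{\alpha_0^p} \equiv v^p \equiv (\alpha_0 + \overline{\alpha_0})^p \pmod q$, completing the argument. The delicate point is establishing the splitting and unramifiedness of $q$, which both rest on primitivity via Lemma \ref{Lprim}. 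The hypothesis $\Delta < -4$ does not intervene in this mod-$q$ congruence; it is presumably invoked upstream to exclude extra roots of unity that could otherwise obstruct the passage from the ideal relation $(\alpha) = \fa^n$ with $\fa^m$ principal to the elementwise identity $\alpha_0^p = \alpha$ needed to speak of $Q \in \cS_m(\Z)$ as an honest preimage of $P$.
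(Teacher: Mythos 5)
Your proof is correct and follows essentially the same route as the paper: both arguments reduce the claim to the congruence $\mathrm{Tr}(\alpha_0^p)\equiv\mathrm{Tr}(\alpha_0)^p \bmod q$, using that $q\mid A$ splits and exactly one prime above $q$ divides $\alpha_0$ (by primitivity), so that $\alpha_0$ is congruent to its trace modulo the other prime. The paper computes directly modulo a single prime ideal $\fq$ above $q$ rather than in $\cO/q\cO\cong\mathbb{F}_q\times\mathbb{F}_q$, but this is the same calculation in different notation, and your remark about the role of $\Delta<-4$ (controlling units in passing from the ideal relation to an element identity) matches the paper's implicit use of it.
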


\begin{proof}
Let $\alpha = B + C\omega$ and $(\alpha) = \fa^n$.
If $P =  \iota_{m \to n}(Q)$ for some $Q \in \cS_m(\Z)$,
then $\fa^m = (\beta)$ for $\beta = b+c\omega$ and 
$Q = (A,b,c)$. Thus $\alpha = \pm \beta^p = (\pm \beta)^p$
is a $p$-th power. Let $q$ be a prime dividing $A$; then 
$(q) = \fq\fq'$ splits in $k$, and we have $\beta \in \fq'$ 
and $\beta' \in \fq$.

If $\Delta = 4m$, then $b \equiv c\sqrt{m} \bmod \fq$,
hence $\beta = b+c\sqrt{m} \equiv 2b \bmod \fq$,
$\alpha = B + C\sqrt{m} \equiv 2 B \bmod \fq$, and so
$2B \equiv \alpha = \beta^p \equiv (2b)^p \bmod \fq$.
This implies $2B \equiv (2b)^p \bmod q$ as claimed.

Now assume that $\Delta = 4m+1$. Then $b+c\omega' \in \fq$ shows
that $b+c \equiv c\omega \bmod \fq$ (since $\omega  \omega' = 1$),
hence $2B+C \equiv B + C\omega = (b+c\omega)^p \equiv (2b+c)^p \bmod q$.
\end{proof}

This criterion is not very strong; it does not detect that the
points $(2,1,1)$ or $(3,1,2)$ on $\cS_3: B^2 + BC + 6C^2 = A^3$
are newpoints. On the other hand, $(13,37,6)$ must be a newpoint
since $80 = 2 \cdot 37 + 6$ is not a cube modulo $13$.

\section{Explicit Formulas}\label{SFor}

Let us now make the group law on $\cS_n(\Z)$ explicit by deriving 
addition formulas
\begin{equation}\label{EABC}
  (A_1,B_1,C_1) \oplus (A_2,B_2,C_2) = (A_3,B_3,C_3). 
\end{equation} 
From the definition of the group law it is clear that such addition
formulas must involve computations of greatest common divisors.
The following lemma contains the technical part of the proof:

\begin{lemma}\label{Lgcd}
For points $(A_j,B_j,C_j) \in \cS_n(\Z)$, $j \le 3$, we set
$\alpha_j = B_j + C_j \omega$. Let $\fd = (\alpha_1,\alpha_2')$;
then $\fd = \fe^n$ is an $n$-th ideal power, and with $e = N\fe$,
we have 
\begin{equation}\label{Egcdab}
   \gcd(B_1B_2 + mC_1C_2, B_1C_2 + B_2 C_1 + \sigma C_1C_2) = e^n.
\end{equation}
Conversely, the gcd on the left hand side of (\ref{Egcdab})
is an $n$-th power, and if  (\ref{Egcdab}) holds, then 
$(\alpha_1,\alpha_2') = \fe^n$ for an ideal $\fe$ with norm $e$.
\end{lemma}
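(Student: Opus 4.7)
The plan is to begin with an explicit computation. Using the relation $\omega^2 = \sigma \omega + m$, a direct expansion gives
\[
\alpha_1 \alpha_2 \;=\; (B_1B_2 + mC_1C_2) + (B_1C_2 + B_2C_1 + \sigma C_1C_2)\,\omega,
\]
so the gcd in (\ref{Egcdab}) is exactly the \emph{content} of the element $\alpha_1\alpha_2 \in \cO$. Prime by prime, the content $g$ of any $\gamma \in \cO$ satisfies $v_p(g) = \min_{\fp \mid p}\lfloor v_\fp(\gamma)/e_\fp\rfloor$, which is the form most convenient for comparison with $v_p(e^n)$.

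Next I would identify $\fe$. By the lemma preceding Prop.~\ref{P1}, $(\alpha_j) = \fa_j^n$ for an ideal $\fa_j \subset \cO$. A prime-by-prime valuation check in the Dedekind domain $\cO$ shows $\mathfrak{x}^n + \mathfrak{y}^n = (\mathfrak{x} + \mathfrak{y})^n$ for any ideals $\mathfrak{x}, \mathfrak{y}$, so
\[
\fd \;=\; (\alpha_1) + (\alpha_2') \;=\; \fa_1^n + (\fa_2')^n \;=\; (\fa_1 + \fa_2')^n \;=\; \fe^n, \qquad \fe := \fa_1 + \fa_2',
\]
which establishes the first claim and defines $e = N\fe$.

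The heart of the argument is the prime-by-prime identity $v_p(\gcd) = v_p(e^n)$. By Lemma~\ref{Lprim}(b), primitivity together with $N\alpha_j = A_j^n$ forces $(\alpha_j) + (\alpha_j') = (1)$, so no inert or ramified prime of $\cO$ divides any $\alpha_j$ and both sides have $p$-valuation $0$ at such $p$. For a split prime $(p) = \fp\fp'$, set $a_j = v_\fp(\alpha_j)$ and $b_j = v_{\fp'}(\alpha_j)$; primitivity forces $\min(a_j, b_j) = 0$, while $(\alpha_j) = \fa_j^n$ forces $n \mid a_j$ and $n \mid b_j$. A short case analysis on which coordinate of each $(a_j, b_j)$ vanishes verifies the min-identity
\[
\min(a_1 + a_2,\; b_1 + b_2) \;=\; \min(a_1, b_2) + \min(b_1, a_2),
\]
whose left side is $v_p(\gcd)$, and whose right side equals $v_p(e^n) = n\,v_p(e)$ since $v_p(e) = v_\fp(\fe) + v_{\fp'}(\fe) = \min(a_1,b_2)/n + \min(b_1,a_2)/n$.

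For the converse, the forward direction already shows $\gcd(X,Y) = (N\fe)^n$ for the canonical $\fe = \fa_1 + \fa_2'$, so the gcd is automatically an $n$-th power; if it is written as $e^n$ with $e \ge 0$, then $e = N\fe$ and $\fd = \fe^n$ as required. The main obstacle I anticipate is the split-prime case analysis: the min-identity above fails without primitivity, and one has to keep careful track of which of the two primes above each split $p$ actually divides $\fa_j$ in each of the four configurations.
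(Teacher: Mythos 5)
Your proof is correct and takes essentially the same route as the paper's: both identify the gcd in (\ref{Egcdab}) with the content of $\alpha_1\alpha_2$, and both run a prime-by-prime analysis in which primitivity forces any common rational prime divisor to be split while $(\alpha_j)=\fa_j^n$ forces the relevant valuations to be multiples of $n$. The difference is only presentational — you compute the two $p$-valuations exactly and match them via the min-identity, whereas the paper establishes the two divisibilities ($e^n$ divides the gcd, and the gcd is an $n$-th power whose associated ideal divides $\fd$) separately.
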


\begin{proof}
Since $(\alpha_j) = \fa_j^n$, the ideal $\fd$ must be an $n$-th power.
From $\fe \mid \fa_1$ and $\fe \mid \fa_2'$ we deduce that 
$(e^n) = (\fe\fe')^n \mid (\alpha_1\alpha_2)$, and now
\begin{equation}\label{Eal12}
 \alpha_1 \alpha_2 = (B_1+C_1\omega)(B_2+C_2\omega)
    = B_1B_2 + C_1C_2m + (B_1C_2 + B_2C_1 + \sigma C_1C_2)\omega 
\end{equation}
implies $e^n \mid \gcd(B_1B_2 + C_1C_2m, B_1C_2 + B_2C_1 + \sigma C_1C_2)$.

If, conversely, $p$ is a prime dividing 
$d = \gcd(B_1B_2 + C_1C_2m,B_1C_2 + B_2C_1 + \sigma C_1C_2)$, then the
primitivity of $P_j$ implies that $(p) = \fp\fp'$ must be split in 
$K = \Q(\sqrt{\Delta}\,)$. If, say, $\fp \mid \alpha_1$, then the
primitivity of $\alpha_1$ shows that we must have $\fp' \mid \alpha_2$
and therefore $\fp' \mid \alpha_1'$. Thus if $p^m$ is the exact
power of $p$ dividing $d$, then $\fp^m$ is the exact power of $\fp$
dividing $\alpha_1$, and the fact that $(\alpha_1)$ is an $n$-th ideal 
power shows that $m$ must be a multiple of $n$. This implies that 
\begin{itemize}
\item $d = e^n$ must be an $n$-th power, 
\item $(e) = \fe \fe'$ is the norm of an ideal $\fe$, 
\item and $\fe^n \mid (\alpha_1,\alpha_2')$.
\end{itemize}
This completes the proof.
\end{proof}

Now we can present the explicit formulas for adding points on $\cS_n(\Z)$:

\begin{proposition}\label{Pexpl}
For $(A_1,B_1,C_1), (A_2,B_2,C_2) \in \cS_n(\Z)$ we have
the addition formula (\ref{EABC}), where 
$$ A_3 = \frac{A_1A_2}{e^2},  \quad
   B_3 = \frac{B_1B_2 + mC_1C_2}{e^n}, \quad
   C_3 = \frac{B_1C_2 + B_2C_1 + \sigma C_1C_2}{e^n},$$
with $e$ as in (\ref{Egcdab}).
\end{proposition}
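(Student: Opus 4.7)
The plan is to unpack the definition of the group law on $\cS_n(\Z)$ and reduce the calculation to the gcd statement of Lemma \ref{Lgcd}. By Theorem \ref{T1}, the sum $P_3 = P_1 \oplus P_2$ in $\cS_n(\Z)$ is characterized by $(\alpha_1 \alpha_2)\N^n = \alpha_3 \N^n$ in $\Pi_n$, where $\alpha_j = B_j + C_j \omega$. Spelling this out, one has
$$
\alpha_1 \alpha_2 = a^n \alpha_3
$$
for a unique positive integer $a$ chosen maximal, equivalently chosen so that $\alpha_3$ is primitive.

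The first step will be to expand $\alpha_1 \alpha_2$ in the basis $\{1, \omega\}$; this has already been done in equation (\ref{Eal12}), yielding the two coordinates $B_1 B_2 + m C_1 C_2$ and $B_1 C_2 + B_2 C_1 + \sigma C_1 C_2$. Dividing by $a^n$ produces the coordinates $B_3, C_3$ of $\alpha_3$, and the primitivity requirement $\gcd(B_3, C_3) = 1$ forces $a^n$ to equal the gcd of these two expressions. By Lemma \ref{Lgcd}, that gcd is exactly $e^n$, so $a = e$, and the formulas for $B_3$ and $C_3$ follow at once.

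For $A_3$, I would take norms of the identity $\alpha_1 \alpha_2 = e^n \alpha_3$. Using $N\alpha_j = A_j^n$ this yields $(A_1 A_2)^n = e^{2n} A_3^n$, whence $A_3 = A_1 A_2 / e^2$ (with the sign being fixed by the standing convention when $n$ is even). Integrality is automatic from the ideal-theoretic picture: the inclusion $\fe^n = (\alpha_1, \alpha_2') \supseteq (\alpha_1) = \fa_1^n$ reads $\fe^n \mid \fa_1^n$ in the Dedekind order $\cO$, and unique factorization gives $\fe \mid \fa_1$, so $e = N\fe$ divides $|A_1|$; symmetrically $e \mid |A_2|$, whence $e^2 \mid A_1 A_2$.

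The only real obstacle here is conceptual rather than technical: one must recognize that the abstract primitivization step (``divide by the maximal $n$-th power of a positive integer'') in the definition of the group law corresponds precisely to dividing by the $n$-th power of the ideal gcd $\fe = (\alpha_1, \alpha_2')$, and that Lemma \ref{Lgcd} is exactly the bridge that translates this ideal-theoretic gcd into the rational gcd of the coordinates of $\alpha_1 \alpha_2$. Once this dictionary is in place, the proposition reads itself off the expansion of the product.
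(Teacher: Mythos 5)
Your proposal is correct and follows essentially the same route as the paper: expand $\alpha_1\alpha_2$ via equation (\ref{Eal12}), invoke Lemma \ref{Lgcd} to identify the maximal $n$-th power to divide out as $e^n$ so that $\alpha_3 = \alpha_1\alpha_2/e^n$, and then take norms to get $A_3 = A_1A_2/e^2$. The extra details you supply (that primitivity forces $a=e$, and the ideal-theoretic integrality check for $A_3$) are correct elaborations of steps the paper leaves implicit.
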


\begin{proof}
The group law is defined via 
$\alpha_1 \N^n  \cdot \alpha_2 \N^n = \alpha_3 \N^n$, where
$\alpha_3$ is required to be primitive. Equation (\ref{Eal12})
and Lemma \ref{Lgcd} show that $\alpha_3 = \alpha_1\alpha_2/e^n$.
Taking norms yields $A_3^n = A_1^nA_2^n/e^{2n}$, and this proves 
the claim.
\end{proof}

\begin{figure}[tb]
\includegraphics[width=5.5cm]{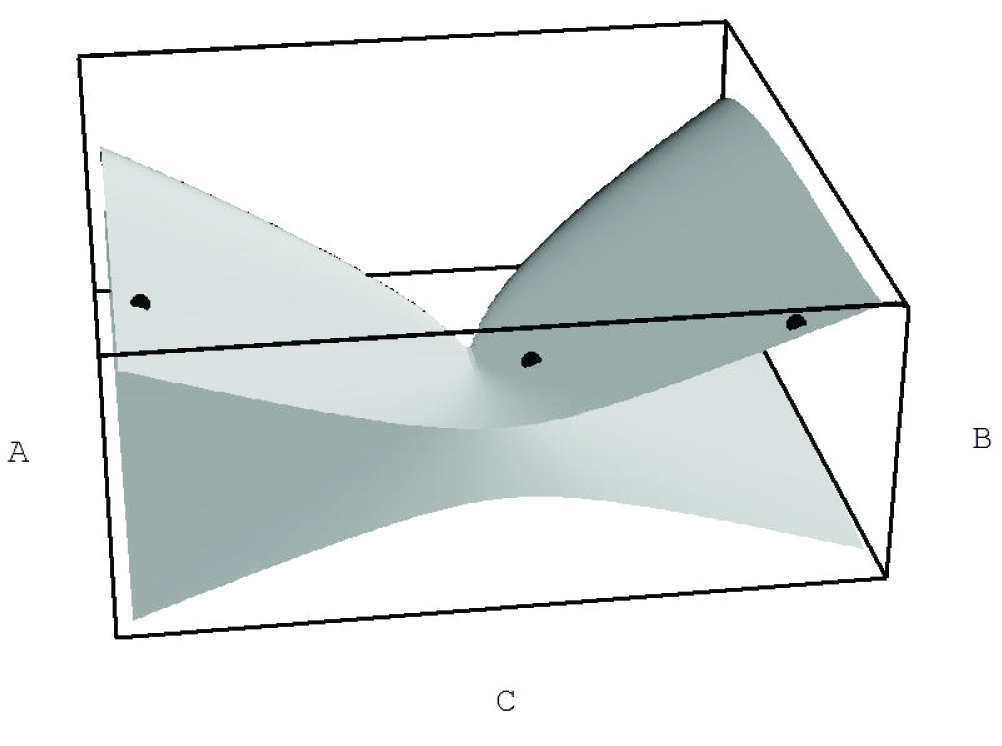}
\caption{From left to right, $(3,92,13)\oplus (3,17,-2) \oplus (9,93,-11) = (1,1,0)$ on $B^2+BC-57C^2=A^3$}
\label{firstfig2}
\end{figure}

\section{From Points to Forms}

Since there is a bijection between ideal classes and equivalence 
classes of binary quadratic forms, we can also describe the group
law in terms of forms. It turns out that the geometric aspects of
the description of the group law on $\cS_n(\Z)$ in terms of
forms adds a lot to our understanding of the arithmetic of Pell
surfaces and Pell conics. For this reason, we will now construct
a map sending primitive points on $\cS_n(\Z)$ to primitive
quadratic forms with discriminant $\Delta$.

Given $(A,B,C) \in \cS_n(\Z)$, consider the form
$$ \tQ_P = (A,2B+\sigma C,A^{n-1}). $$
In order to get positive definite forms if $\Delta < 0$ we 
now agree to replace $\cS_n(\Z)$ by $\cS_n(\Z)^+$ in this case.
It is easily checked that $\disc \tQ_P = \Delta C^2$; moreover,
Dirichlet composition immediately shows that $\tQ_P^n$ is the
principal form (with discriminant $\Delta C^2$). For 
constructing a form with discriminant $\Delta$, we have to 
``underive'' $\tQ_P$. This process replaces a form $(a,b,c)$
with discriminant $\Delta C^2$ by an equivalent form 
$(a',b'C,c'C^2)$, and then maps it to $Q_P = (a',b',c')$, which 
is a primitive form with discriminant $\Delta$. Mapping 
$P \in \cS_n(\Z)$ to the equivalence class of the form $Q_P$ 
turns out to be a homomorphism  $\cS_n(\Z) \lra \Cl^+(\Delta)[n]$. 

Underiving $\tQ_P$ is accomplished by changing the middle 
coefficient modulo $2A$ in such a way that it becomes a multiple 
of $C$. For motivating the following lemma, consider the
equation $2B + 2Ak = 2\beta C$; dividing through by $2$
and reducing mod $A$ yields $\beta C \equiv B \bmod A$, and
this congruence has a unique solution. In this way we find

\begin{lemma}
Given a point $P = (A,B,C) \in \cS_n(\Z)$ with $B^2 - 4AC = \Delta C^2$, 
let $\beta$ be an integer satisfying the congruence 
$\beta \equiv \frac BC \bmod A$; then $\beta^2 \equiv \Delta \bmod A$.
Define a quadratic form $Q_P = (A,2\beta+\sigma,\gamma)$ with
$\gamma = Q_0(\beta,1)/A$. Then $Q_P$ is a primitive form with 
discriminant $\Delta$, and $Q_P$ is positive definite if $\Delta < 0$.
\end{lemma}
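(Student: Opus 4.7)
The plan is to establish the three ingredients of the lemma in sequence: that $\beta$ is well-defined and that the claimed congruence holds, that $\gamma$ is an integer so that $Q_P$ is a genuine form of discriminant $\Delta$, and finally that $Q_P$ is primitive (and positive definite when $\Delta<0$).

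First I would check that $\gcd(A,C)=1$: any common prime divisor $p$ would, via $A^n=Q_0(B,C)$, force $p\mid B^2$ and hence $p\mid B$, contradicting primitivity of $(A,B,C)$. Thus $C$ is a unit modulo $A$ and $\beta C\equiv B\bmod A$ determines $\beta$ uniquely modulo $A$. Reducing $Q_0(B,C)=A^n$ modulo $A$ and replacing $B$ by $\beta C$ gives $C^2 Q_0(\beta,1)\equiv 0\bmod A$; cancelling the unit $C^2$ yields $Q_0(\beta,1)=\beta^2+\sigma\beta-m\equiv 0\bmod A$, which on multiplying by $4$ and completing the square (using $\sigma^2=\sigma$ and $\Delta=4m+\sigma$) becomes $(2\beta+\sigma)^2\equiv\Delta\bmod 4A$; this is the precise content of the claim ``$\beta^2\equiv\Delta\bmod A$.'' The same divisibility $A\mid Q_0(\beta,1)$ shows that $\gamma=Q_0(\beta,1)/A\in\Z$, and the one-line expansion $(2\beta+\sigma)^2-4A\gamma=\sigma^2+4m=\Delta$ confirms that $Q_P=(A,2\beta+\sigma,\gamma)$ has discriminant $\Delta$.

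The step I expect to be the main obstacle is primitivity. Suppose a prime $p$ divides each of $A$, $2\beta+\sigma$, and $\gamma$; then $p^2$ divides $(2\beta+\sigma)^2-4A\gamma=\Delta$. Fundamentality of $\Delta$ rules this out unless $\sigma=0$ and $p=2$, i.e.\ unless $\Delta=4m$ with $m$ squarefree and $m\equiv 2,3\bmod 4$. In that residual case the middle coefficient $2\beta$ is automatically even, so the real content is to exclude $2\mid A$ directly; for this I would run a short parity analysis on $A^n=B^2-mC^2$, using $\gcd(B,C)=1$ together with the hypothesis $n\ge 2$ to show that $2\mid A$ would force $A^n\equiv 2\bmod 4$ in each of the two residue classes of $m\bmod 4$ (with $B,C$ both odd when $m\equiv 3$, and with $B$ even, $C$ odd when $m\equiv 2$), a contradiction. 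Positive-definiteness when $\Delta<0$ is then automatic: by the convention $\cS_n(\Z)^+$ already in force one has $A>0$, and a form with negative discriminant and positive leading coefficient is positive definite.
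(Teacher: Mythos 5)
Your proof is correct, and it arrives at the same three facts that drive the paper's argument --- $\beta C\equiv B\bmod A$, $A\mid Q_0(\beta,1)$, and $(2\beta+\sigma)^2-4A\gamma=\Delta$ --- but by a more direct route. The paper works with the derived form $\tQ_P=(A,2B+\sigma C,A^{n-1})$ of discriminant $\Delta C^2$, applies the unimodular substitution $\smatr{1}{k}{0}{1}$ to make the middle coefficient equal to $2\beta C$, and then ``underives''; you instead reduce the defining equation $Q_0(B,C)=A^n$ modulo $A$ and cancel the unit $C^2$. Your version supplies several details the paper passes over in silence: that $\gcd(A,C)=1$, so $\beta$ is well defined; that $\gamma$ is actually an integer; and, most importantly, an actual proof of primitivity, which the paper merely asserts. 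Your treatment of the residual case $p=2$, $\sigma=0$ via the parity analysis showing $2\nmid A$ is valid (both subcases $m\equiv 2,3\bmod 4$ check out), though there is a shortcut that avoids it: any form of fundamental discriminant is automatically primitive, since if $2$ divided $A$, $2\beta$ and $\gamma$ simultaneously then $m=\Delta/4=\beta^2-A\gamma$ would be $\equiv 0,1\bmod 4$, contradicting $m\equiv 2,3\bmod 4$. Finally, your reading of the congruence claim as $(2\beta+\sigma)^2\equiv\Delta\bmod 4A$ is the right one: as literally stated, $\beta^2\equiv\Delta\bmod A$ is not what one gets for points of $\cS_n(\Z)$ (for $\sigma=0$ the true statement is $\beta^2\equiv\Delta/4\bmod A$), and the hypothesis $B^2-4AC=\Delta C^2$ printed in the lemma is itself a garbled form of $(2B+\sigma C)^2-4A^n=\Delta C^2$, on which the paper's one-line verification silently relies. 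So your proposal is not only correct but repairs the statement it proves.
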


\begin{proof}
The claim concerning $\beta$ follows easily from  
$\beta^2 \equiv \frac{B^2}{C^2} \equiv \frac{B^2-4AC}{C^2} = \Delta 
 \bmod A$.

Assume now that $\Delta = 4m$, and set $A = (A,2B,A^{n-1})$. 
From $\beta \equiv \frac BC \bmod A$
we see that there is an integer $k$ with $\beta C = B + Ak$.
Setting $S = \smatr{1}{k}{0}{1}$ we find $Q' = Q|_S = (A,2B',C')$
with $2B' = 2B + 2Ak = 2\beta C$; the integer $C'$ is determined
by $(2B')^2 - 4AC' = \Delta C^2$, which gives
$C' = \frac{\beta^2-m}{A} C^2$. Setting $\gamma = \frac{\beta^2-m}{A}$,
the form $Q_1 = (A,2\beta,\gamma)$ is primitive, has discriminant 
$\Delta$, and the fact that $A > 0$ implies that $Q_1$ is positive
definite if $\Delta < 0$.

The proof in the case $\Delta = 4m+1$ is analogous; here we find
$\gamma = \frac{\beta^2+\beta-m}A$.
\end{proof}

Sending $P \too Q_P$ defines a map $b: \cS_n(\Z) \lra \Cl^+(\Delta)$
between two abelian groups; we already know that the corresponding
map to the ideal class group is a homomorphism, and of course the
same holds for form classes. We will check the details below; now 
let us determine the kernel of $b$. To this end, recall how we 
constructed $b$: to a point $(A,B,C) \in \cS_n(\Z)$
we have attached a quadratic form $\tQ_P = (A,2B+\sigma C,A^{n-1})$
with discriminant $\Delta C^2$; this form $\tQ_P$ is equivalent
to a form $Q_P' = (A,(2\beta+\sigma)C,\gamma C^2)$, and underiving
$Q_P'$ gave us $Q_P = (A,2\beta+\sigma,\gamma)$.
 
Now a point $P = (A,B,C) \in \cS_n(\Z)$ is in the kernel if and only 
if $Q_P \sim Q_0$, which happens if and only if $Q_P$ represents $1$.
Multiplying $Q_P(x,y) = 1$ through by $C^2$ this shows that 
$Q_P'(Cx,y) = C^2$ (conversely, this equation implies $Q_P(x,y) = 1$).
But $Q_P'$ represents $C^2$ properly if and only if the equivalent
form $\tQ_P$ does. Thus we have shown

\begin{proposition}
The kernel of the map $b: \cS_n(\Z) \lra \Cl^+(\Delta)$
consists of all points $(A,B,C) \in \cS_n(\Z)$ with the 
following property: there exist coprime integers $T$, $U$
such that $AT^2 + (2B+\sigma C)TU + A^{n-1}U^2 = C^2$.
\end{proposition}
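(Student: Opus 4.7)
The plan is to trace the condition $P \in \ker b$ through the construction of $Q_P$ and back into the explicit coordinates of $\tQ_P$. I would proceed in three steps: rephrase the kernel condition as a representation statement for $Q_P$, transfer it via the underivation process to a proper representation statement for the intermediate form $Q_P'$, and finally invoke the proper equivalence $Q_P' \sim \tQ_P$.

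For the first step, I would use the standard fact that a binary quadratic form of discriminant $\Delta$ lies in the principal class of $\Cl^+(\Delta)$ if and only if it represents $1$ (and any such representation is automatically primitive, since $\gcd(x,y)^2$ divides $1$). Combined with the already-established fact that $P \in \ker b$ iff $Q_P \sim Q_0$, this reduces the claim to: $Q_P$ represents $1$ if and only if $\tQ_P$ properly represents $C^2$.

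For the second step, comparing coefficients of $Q_P = (A, 2\beta + \sigma, \gamma)$ and $Q_P' = (A, (2\beta + \sigma)C, \gamma C^2)$ yields the identity
\[
    Q_P'(X,Y) \,=\, Q_P(X, CY),
\]
and hence $Q_P'(Cx, y) = C^2\, Q_P(x,y)$. This immediately shows $Q_P(x,y) = 1 \Leftrightarrow Q_P'(Cx,y) = C^2$. For the converse direction --- that every proper representation $Q_P'(X,Y) = C^2$ has $C \mid X$, so can be written as $(X,Y) = (Cx,y)$ --- I would reduce modulo $C$ and use $\gcd(A,C) = 1$ (a consequence of the primitivity of $(B,C)$ and $Q_0(B,C) = A^n$) to conclude $C \mid X^2$; a prime-by-prime analysis of the equation $Q_P'(X,Y) = C^2$, exploiting the coprimality of $(X,Y)$, then upgrades $C \mid X^2$ to $C \mid X$. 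Writing $X = Cx$ recovers $Q_P(x,Y) = 1$.

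Finally, since $\tQ_P$ and $Q_P'$ differ only by the unimodular substitution $S = \smatr{1}{k}{0}{1}$, they are properly equivalent and hence properly represent the same integers; in particular $\tQ_P$ properly represents $C^2$ iff $Q_P'$ does. Writing out $\tQ_P(T,U) = C^2$ in coordinates gives the displayed equation of the proposition. The technical heart of the proof is the second step --- specifically the upgrade from $C \mid X^2$ to $C \mid X$ under the proper representation hypothesis --- which for non-squarefree $C$ requires a careful local analysis using the coprimality of $(X,Y)$ in an essential way.
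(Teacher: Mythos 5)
Your overall route is the same as the paper's: reduce membership in $\ker b$ to ``$Q_P$ represents $1$'', transfer this to a representation of $C^2$ by $Q_P'$ via the identity $Q_P'(X,Y)=Q_P(X,CY)$, and pass to $\tQ_P$ by the unimodular substitution. Your treatment of the backward direction is actually more complete than the paper's, which asserts without comment that a proper representation $Q_P'(X,Y)=C^2$ descends to a representation of $1$ by $Q_P$. You correctly isolate the needed step $C\mid X$, and your local argument works: for $p^k\,\|\,C$ and $v_p(X)=j<k$ the term $AX^2$ has $p$-adic valuation exactly $2j$ (using $\gcd(A,C)=1$), strictly smaller than the valuations of the other two terms and of $C^2$, a contradiction; note this does not even require $\gcd(X,Y)=1$, so the ``careful local analysis'' you defer is in fact routine.

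The genuine gap is in the forward direction, which you dispatch with the remark that representations of $1$ are automatically primitive. That gives $\gcd(x,y)=1$, but the representation of $C^2$ you produce is $(X,Y)=(Cx,y)$, and you never verify $\gcd(Cx,y)=1$; this requires $\gcd(C,y)=1$, which need not hold. The paper's proof has the identical defect, and the proposition as stated is in fact false. Take $\Delta=-23$, $n=3$, so $\cS_3: B^2+BC+6C^2=A^3$, and let $P=(27,-119,-22)$, obtained from $(1+2\omega)^3=-119-22\omega$ with $N(1+2\omega)=27$. Here $\fa=(1+2\omega)$ is principal and $Q_P=(27,29,8)\sim(1,1,6)=Q_0$, so $P\in\ker b$; but $\tQ_P=(27,-260,729)$ represents $C^2=484$ only at $(T,U)=\pm(8,2)$, which are not coprime. (These correspond to the only representations $\pm(-1,2)$ of $1$ by $Q_P$, whose second coordinate shares the factor $2$ with $C=-22$.) Both the statement and your proof are repaired by deleting the coprimality requirement on $T,U$: your valuation argument shows that \emph{every} representation of $C^2$ by $Q_P'$, proper or not, satisfies $C\mid X$ and hence descends to a representation of $1$ by $Q_P$, so the correct equivalence is with plain representability of $C^2$ by $\tQ_P$. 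As written, however, your second step's forward implication does not deliver the coprime pair $(T,U)$ that the proposition demands.
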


For deciding whether the point $(2,1,1)$ on $\cS_3: B^2 + BC + 6C^2 = A^3$ 
is in the kernel of $b$ we have to look at $2T^2 + 3TU + 4U^2 = 1$. This 
equation has solutions if and only if the form $(2,3,4)$ with 
discriminant $-23$ represents $1$, hence is equivalent to 
the principal form $Q_0$. This is not the case, since 
$(2,3,4) \sim (2,-1,6)$. We may also multiply the original
equation through by $8$ and complete squares; this gives
$(4T+3U)^2 + 23U^2 = 8$. This equation is clearly unsolvable in 
integers, but has rational solutions, such as $(T,U) = (0,\frac12)$, 
for example; this implies that we do not have a chance to show the 
unsolvability of the equation using congruences or $p$-adic methods.

\subsection*{The map $b: \cS_n(\Z) \lra \Cl^+(\Delta)$ is a homomorphism.}

Consider a point $P = (A,B,C)$ on $\cS_n(\Z)$. We know that 
$\alpha = B+C\omega = \fa^n$ for some ideal $\fa$ in the maximal
order of $K$. For finding the form attached to $\fa$ we have to 
find an oriented $\Z$-basis $\{A, b+\omega\}$ of $\fa$.
Let $c$ be an integer such that $cC \equiv 1 \bmod A$; then 
$(A,B+C\omega) = (A,cB+cC\omega) = (A,\beta+\omega)$, where $\beta$
denotes an integer in the residue class $cB \equiv \frac BC \bmod A$.
It is easy to see that $\{A,\beta+\omega\}$ has the desired
properties; the form attached to $\fa$ then is
$$ Q_\fa(x,y) = \frac{N(A x + (\beta+\omega)y)}{A} = (A,2\beta+\sigma,\gamma), $$ 
where $\gamma = N(\beta + \omega)/A = Q_0(\beta,1)/A$. In particular,
$Q_\fa = Q_P$.

The map sending $\fa$ to $Q_\fa$ is known to induce an isomorphism 
between the ideal class group of $\Q(\sqrt{\Delta}\,)$ and the
strict class group of forms with discriminant $\Delta$. 



\begin{thebibliography}{99}

\bibitem{Boel} R. B\"olling,
{\em \"Uber einen Homomorphismus der rationalen Punkte 
     elliptischer Kurven},
Math. Nachr. {\bf 96} (1980), 207--244 
%

\bibitem{Bu76} D. Buell,
{\em Class groups of quadratic fields},
Math. Comp. {\bf 30} (1976),
%

\bibitem{Bu77} D. Buell,
{\em Elliptic curves and class groups of quadratic fields},
J. London Math. Soc. (2) {\bf 15} (1977), 19--25
%

\bibitem{Cr73} M. Craig,
{\em A type of class group for imaginary quadratic fields},
Acta Arith. {\bf 22} (1973), 449--459
%

\bibitem{Flath} D. Flath,
{\em Introduction to number theory},
Wiley \& Sons, New York, 1989
%

\bibitem{Joub} P. Joubert,
{\em Sur la th\'eorie des fonctions elliptiques et son application
      `a la th\'eorie des nombres},
C.R. Acad. Sci. Paris {\bf 50} (1860), 774--779
%

\bibitem{Lem00} F. Lemmermeyer,
{\em Kreise und Quadrate modulo $p$},
Math. Sem. Ber. {\bf 47} (2000), 51--73
%

\bibitem{Lem03} F. Lemmermeyer, 
{\em Conics -- A poor man's elliptic curves}, 
{\tt arXiv:math/0311306v1}, (2003)
%

\bibitem{LemDes3} F. Lemmermeyer, 
{\em Higher descent on Pell conics III -- The first 2-descent}, \\
{\tt arXiv:math/0311310v1}, (2003)
%

\bibitem{LemPep} F. Lemmermeyer,
{\em Binary Quadratic Forms and Counterexamples to Hasse's
        Local-Global Principle},
preprint 2009
%

\bibitem{Nagell} T. Nagell,
{\em \"Uber die Klassenzahl imagin\"ar-quadratischer Zahlk\"orper}, 
Abh. Math. Semin. Hamburg, {\bf 1} (1922), 140--150
%

\bibitem{Soleng} R. Soleng,
{\em Homomorphisms from the group of rational points on
     elliptic curves to class groups of quadratic number fields},
J. Number Theory {\bf 46} (1994), 214--229
%

\bibitem{Yama} Y. Yamamoto,
{\em On unramified Galois extensions of quadratic number fields},
Osaka J. Math. {\bf 7} (1970), 57--76
%
\end{thebibliography}
\end{document}